\newtheorem{theorem}{\bf Theorem}[section]
\newtheorem{lemma}{\bf Lemma}[section]
\newtheorem{definition}{\bf Definition}[section]
\newtheorem{remark}{\bf Remark}[section]
\newcommand{\longthmtitle}[1]{\mbox{}\textup{\textbf{(#1):}}}
\newcommand{\real}{{\mathbb{R}}}
\newcommand{\realpositive}{{\mathbb{R}}_{>0}}
\newcommand{\realnonnegative}{{\mathbb{R}}_{\ge 0}}
\newcommand{\integersnonnegative}{\mathbb{Z}_{\geq 0}}
\newcommand{\integerspositive}{\mathbb{Z}_{> 0}}
\newcommand{\eps}{\epsilon}
\newcommand{\oprocendsymbol}{\hbox{$\bullet$}}
\newcommand{\oprocend}{\relax\ifmmode\else\unskip\hfill\fi\oprocendsymbol}
\newcommand{\defeq}{\vcentcolon=}
\DeclareMathOperator*{\argmin}{arg\,min}
\newcommand{\Tini}{{T_{\textup{ini}}}}
\newcommand{\Tf}{{T_{\textup{f}}}}
\newcommand{\yini}{{y_{\textup{ini}}}}
\newcommand{\Uf}{{U_{\mathrm{f}}}}
\newcommand{\Yp}{{Y_{\mathrm{p}}}}
\newcommand{\Yf}{{Y_{\mathrm{f}}}}
\newcommand{\Uphat}{{\widehat{U}_{\mathrm{p}}}}
\newcommand{\Ufhat}{{\widehat{U}_{\mathrm{f}}}}
\newcommand{\Yphat}{{\widehat{Y}_{\mathrm{p}}}}
\newcommand{\Yfhat}{{\widehat{Y}_{\mathrm{f}}}}
\newcommand{\col}{{\textup{col}}}
\newcommand{\Prob}{\mathbb{P}}
\newcommand{\Exp}{\mathbb{E}}
\newcommand\scalemath[2]{\scalebox{#1}{\mbox{\ensuremath{\displaystyle #2}}}}
\title{Regularized and Distributionally Robust Data-Enabled Predictive Control}
\author{Jeremy Coulson \qquad John Lygeros \qquad Florian D\"{o}rfler
\thanks{All authors are with the Department of Information Technology and Electrical Engineering at ETH Z\"{u}rich, Switzerland~\texttt{\{jcoulson, lygeros, dorfler\}@control.ee.ethz.ch}.}}
\begin{document}

\maketitle
\thispagestyle{empty}
\pagestyle{empty}

\begin{abstract}

In this paper, we study a data-enabled predictive control (DeePC) algorithm applied to unknown stochastic linear time-invariant systems. The algorithm uses noise-corrupted input/output data to predict future trajectories and compute optimal control policies. To robustify against uncertainties in the input/output data, the control policies are computed to minimize a worst-case expectation of a given objective function. Using techniques from distributionally robust stochastic optimization, we prove that for certain objective functions, the worst-case optimization problem coincides with a regularized version of the DeePC algorithm. These results support the previously observed advantages of the regularized algorithm and provide probabilistic guarantees for its performance. We illustrate the robustness of the regularized algorithm through a numerical case study.

\end{abstract}

\section{Introduction}

Data-driven control approaches have become increasingly popular in recent years due to the growing complexity of systems~\cite{ZSH-ZW:13}. The increase in system complexity poses significant challenges to model-based control design, in which acquiring an accurate model for the system is the most crucial step, and often the most time-consuming and expensive step~\cite{LL:86-book,HH:05}. The goal of data-driven control approaches is to bypass the system identification step, and directly use system measurements for control design. 

The focus of this paper is on the problem of data-driven control for unknown systems. We seek to develop a data-driven control algorithm that uses input/output data from the unknown system to compute control inputs which satisfy system constraints. Furthermore, since uncertainties and noise corruption in the data measurements are inevitable in any real-world application, we seek algorithms that are robust to the uncertain and noisy data measurements used. 

Several approaches such as safe-learning, learning based Model Predictive Control (MPC), and stochastic MPC aim at solving similar problems (see, e.g.,~\cite{KPW-MNZ:18, FB-MT-AS-AK:17, AA-HG-SSS-CT:13,AM:16} and references therein). However, these methods rely on having an a-priori known model for the system, require full-state measurement, and often assume knowledge about the disturbances that act on the system (e.g., bounded disturbances within a known set). Another large area of connected literature is reinforcement learning (see~\cite{FL-DV-KGV:12,BR:18} and references therein). These methods also address similar problems, but usually require a large number of data measurements. Additionally, these approaches sometimes result in unreliable outcomes due to the sensitivity of the algorithms to hyper-parameters~\cite{RI-HP-MG-DP:17}. Hence, none of the approaches above are suitable for real-time, constrained, and optimal control based only on input/output samples.

We employ an alternative approach which does not rely on a particular parametric system model or assume any knowledge of the disturbances. The approach uses behavioural system theory to characterize the behaviour of the unknown system (i.e., the possible trajectories of the system)~\cite{JCW-PR-IM-BDM:05}. This behavioural approach was used for control design in~\cite{IM-PR:08} where optimal open-loop control policies were computed using input/output data from the unknown system. This was later extended in~\cite{JC-JL-FD:18} to a receding horizon set up, which was proved to have equivalent closed-loop performance when compared to standard MPC in the case of deterministic linear time invariant (LTI) systems. 

The approach presented in this paper is built upon the \textbf{D}ata-\textbf{e}nabl\textbf{e}d \textbf{P}redictive \textbf{C}ontrol (DeePC) algorithm for deterministic linear systems presented in~\cite{JC-JL-FD:18}. The DeePC algorithm uses raw input/output data to compute optimal controls for unknown systems using real-time output feedback via a receding horizon implementation, thus allowing for the consideration of input/output constraints. In turn, this approach is much simpler to implement than the safe-learning, MPC, and reinforcement learning approaches above, which in the case of an unknown system with only output measurements available, require system identification and state observer design. On the other hand, the DeePC algorithm may not be amenable to stochastic systems, as the theory is built on the assumption of a deterministic LTI system. However, it was observed in~\cite{JC-JL-FD:18} that after adding heuristic regularizations to the algorithm, DeePC still performed well on systems in the presence of stochastic disturbances, yet no robustness guarantees were given. We focus on extending this algorithm for use on stochastic systems. In doing so, we will give rigorous meaning to the heuristic regularizations presented in~\cite{JC-JL-FD:18} by means of probabilistic guarantees on robust performance.

Due to the receding horizon implementation of the DeePC algorithm, we are required to repeatedly solve an optimization problem whose objective function naturally depends on data measurements that are affected by unknown disturbances. In order to be robust to these disturbances, we use distributionally robust optimization techniques~\cite{PME-DK:18, SSA-DK-PME:17} which give rise to a regularized version of the DeePC algorithm similar to the heuristics presented in~\cite{JC-JL-FD:18}. 

\emph{Contributions:}  Motivated by distributionally robust optimization and its connection to regularized optimization~\cite{PME-DK:18, SSA-DK-PME:17}, we develop a novel robust DeePC algorithm which uses input/output data to compute optimal and robust control inputs for unknown linear stochastic systems. We give probabilistic guarantees on its performance, giving rigorous justification for the use of the regularized control algorithm given in~\cite{JC-JL-FD:18}. As a by-product, we gain new insights into the design of such regularizers.

\emph{Organization:} Section~\ref{sec:problem} contains the problem statement. In Section~\ref{sec:preliminaries}, we recall the DeePC algorithm and show how robust stochastic optimization techniques can be applied to improve the algorithm. Section~\ref{sec:mainresults} contains the main results showing that regularizations on the DeePC algorithm result in robust performance. We illustrate the performance of the regularized algorithm in Section~\ref{sec:simulations}. The paper is concluded in Section~\ref{sec:conclusion}.

\emph{Notation:} Given $x,y\in\real^n$, $\langle x,y\rangle\defeq x^Ty$ denotes the usual inner product on $\real^n\times\real^n$. We denote the dual norm of a norm $\|\cdot\|$ on $\real^n$ by $\|x\|_{\ast}\defeq\sup_{\|y\|\leq 1}\langle x,y\rangle$. The conjugate function of a function $f:\real^n\to\real$ is denoted by $f^{\ast}(\theta)\defeq \sup_{x\in\real^n} \langle \theta,x \rangle-f(x)$. We denote the stacked column vector of $x$ and $y$ by $\col(x,y)=(x^T,y^T)^T$. We denote by $\delta_x$ the Dirac distribution at $x$. We use the notation $\widehat{\cdot}$ to denote objects that depend on measured data.

\section{Problem Statement}\label{sec:problem}
Consider the stochastic LTI system
\begin{equation}\label{eq:stochasticsystem}
\begin{cases}
x(t+1)=Ax(t)+Bu(t)+Ev(t) \\
y(t)=Cx(t)+Du(t)+Fv(t),
\end{cases}
\end{equation}
where $A\in\real^{n\times n}$, $B\in\real^{n\times m}$, $C\in\real^{p\times n}$, $D\in\real^{p\times m}$, $E\in \real^{n\times q}$, $F\in \real^{p\times q}$, and $x(t)\in \real^n$, $u(t)\in \real^m$, $y(t)\in \real^p$, $v(t)\in \real^q$ are respectively the state, control input, output, and stochastic disturbance of the system at time $t\in \integersnonnegative$. The disturbance $v(t)$ is drawn from an unknown probability distribution $\Prob_v$ for all $t\in\integersnonnegative$. We assume throughout the paper that the pair $(A,B)$ is controllable, the pair $(A,C)$ is observable, that system~\eqref{eq:stochasticsystem} itself is unknown (i.e., $A$, $B$, $C$, $D$, $E$, $F$ unknown), and that we have access only to input/output measurements, which we will denote by $\hat{u}(t),\hat{y}(t)$. 

We consider a finite-horizon optimal control problem, in which the task is to design control inputs for the unknown system to minimize a given objective function. More specifically, given a time horizon $\Tf\in\integerspositive$, an objective function $f\colon \real^{m\Tf}\times\real^{p\Tf}\to \real$, and the current time $t\in\integersnonnegative$, we wish to choose a sequence of control inputs $u=\col(u(t),\dots,u(t+\Tf-1))\in\mathcal{U}$, where $\mathcal{U}\subset\real^{m\Tf}$ is an input constraint set, such that the resulting stochastic output trajectory, $y=\col(y(t),\dots,y(t+\Tf-1))$, of~\eqref{eq:stochasticsystem} minimizes the expectation of the objective function, i.e., we wish to solve the following optimization problem:
\begin{equation}\label{eq:optexpectation}
\underset{u\in\mathcal{U}}{\inf}\quad
\Exp_{\Prob_v^{\Tf}}\left\{f(u,y)\right\},
\end{equation}
where $\Prob_v^{\Tf}=\Prob_v\times\cdots\times\Prob_v$ is the $\Tf$-fold product distribution.
Such optimization problems also appear in stochastic MPC~\cite{PH-EC-DC-FR-JL:12}.

Solving problem~\eqref{eq:optexpectation} poses two main challenges: the system~\eqref{eq:stochasticsystem} is unknown, and the disturbance distribution $\Prob_v$ is unknown. Hence, we are unable to predict future output trajectories $y$, and we are missing necessary information to compute the expectation. Note that even in the case when system~\eqref{eq:stochasticsystem} and $\Prob_v$ are known, solving~\eqref{eq:optexpectation} would require high-dimensional integration and is often computationally intractable~\cite{GAH-DK-WW:16}. One could also consider including joint output chance constraints in the problem setup~\eqref{eq:optexpectation} which would pose similar challenges as above. We do not consider such constraints as they are beyond the scope of this paper.

To address these challenges, we first simplify the problem by considering the deterministic version of system~\eqref{eq:stochasticsystem}:
\begin{equation}\label{eq:deterministicsystem}
\begin{cases}
x(t+1)=Ax(t)+Bu(t) \\
y(t)=Cx(t)+Du(t).
\end{cases}
\end{equation}
In this case, problem~\eqref{eq:optexpectation} reduces to
\begin{equation}\label{eq:optfunction}
\inf_{u\in\mathcal{U}}f(u,y).
\end{equation}
We recall a data-enabled predictive control (DeePC) algorithm first presented in~\cite{JC-JL-FD:18} that can be used to solve~\eqref{eq:optfunction} for deterministic though unknown systems~\eqref{eq:deterministicsystem} (see Section~\ref{sec:DeePCalg}). We then show how this algorithm can be extended to approach problem~\eqref{eq:optexpectation} for stochastic systems~\eqref{eq:stochasticsystem} using distributionally robust optimization techniques (see Section~\ref{sec:robustoptimization}). The main results can be found in Section~\ref{sec:mainresults}.

\section{Preliminaries}\label{sec:preliminaries}

\subsection{DeePC Algorithm for Deterministic Systems}\label{sec:DeePCalg}
In this section, we recall the DeePC algorithm which uses raw input/output data to construct a non-parametric predictive model that was first developed using a behavioural system theory approach in~\cite{JCW-PR-IM-BDM:05}. Let $L,q,T\in\integerspositive$. We define the Hankel matrix of a signal $w\in\real^{qT}$ with $L$ block rows as the matrix
\[
\mathscr{H}_L(w)\defeq
\begin{pmatrix}
w(1) &w(2) &\dots &w(T-L+1)\\
w(2) &w(3) &\dots &w(T-L+2)\\
\vdots &\vdots &\ddots &\vdots \\
w(L) &w(L+1) &\dots &w(T)
\end{pmatrix}.
\]
We say that signal $w$ is \emph{persistently exciting of order $L$} if $\mathscr{H}_L(w)$ has full row rank. Note that in order for $w$ to be persistently exciting of order $L$, we must have $T\geq(q+1)L-1$, i.e., the signal $w$ has to be sufficiently rich and sufficiently long. The \emph{lag} of system~\eqref{eq:deterministicsystem} is defined as the smallest integer $\ell\in \integerspositive$ such that the observability matrix $\mathscr{O}_{\ell}(A,C)\defeq\col\left(C,CA, \dots,CA^{\ell-1}\right)$ has rank $n$ (see~\cite[Section 7.2]{IM-JCW-SVH-BDM:06-book} for equivalent state space free definitions of lag). 

Like in Section~\ref{sec:problem}, we assume that we have access to input/output measurements. Let $\col(\hat{u},\hat{y})=\col(\hat{u}(1),\dots,\hat{u}(T),\hat{y}(1),\dots,\hat{y}(T))\in\real^{(m+p)T}$ be a measured trajectory of~\eqref{eq:deterministicsystem} of length $T\in\integerspositive$. Assume that $\hat{u}$ is persistently exciting of order $\Tini+\Tf+n$, where $\Tini,\Tf\in\integerspositive$. We organize the data into the Hankel matrices
\begin{equation}\label{eq:UpUfYpYf}
\begin{pmatrix}
\Uphat \\ \Ufhat
\end{pmatrix}\defeq \mathscr{H}_{\Tini+\Tf}(\hat{u}), \quad
\begin{pmatrix}
\Yphat \\ \Yfhat 
\end{pmatrix}\defeq \mathscr{H}_{\Tini+\Tf}(\hat{y}),
\end{equation}
where $\Uphat$ consists of the first $\Tini$ block rows of $\mathscr{H}_{\Tini+\Tf}(\hat{u})$ and $\Ufhat$ consists of the last $\Tf$ block rows of $\mathscr{H}_{\Tini+\Tf}(\hat{u})$ (similarly for $\Yphat$ and $\Yfhat$). Given the current time $t\in\integersnonnegative$, let $\hat{u}_{\textup{ini}}=\col(\hat{u}(t-\Tini),\dots,\hat{u}(t-1))\in\real^{m\Tini}$ and $\hat{y}_{\textup{ini}}=\col(\hat{y}(t-\Tini),\dots,\hat{y}(t-1))\in\real^{p\Tini}$ be the $\Tini$ most recent input and output measurements of~\eqref{eq:deterministicsystem}, respectively. By~\cite[Theorem 1]{JCW-PR-IM-BDM:05}, $\col(\hat{u}_{\textup{ini}},u,\hat{y}_{\textup{ini}},y)\in\real^{(m+p)(\Tini+\Tf)}$ is a trajectory of system~\eqref{eq:deterministicsystem} if and only if there exists $g\in\real^{T-\Tini-\Tf+1}$ such that
\begin{equation}\label{eq:hankel}
\begin{pmatrix}
\Uphat \\
\Yphat \\
\Ufhat \\
\Yfhat
\end{pmatrix}g=
\begin{pmatrix}
\hat{u}_{\textup{ini}} \\
\hat{y}_{\textup{ini}} \\
u \\
y
\end{pmatrix}.
\end{equation}
Furthermore, if $\Tini \geq \ell$, then by~\cite[Lemma 1]{IM-PR:08}, there exists a unique $x(t)\in \real^n$ such that the output trajectory $y$ is uniquely determined by the system~\eqref{eq:deterministicsystem}. In other words, the trajectory $\col(\hat{u}_{\textup{ini}},\hat{y}_{\textup{ini}})$ fixes the underlying initial state $x(t)$ from which the trajectory $\col(u,y)$ evolves. Hence, the Hankel matrix in~\eqref{eq:hankel} serves as a non-parametric predictive model for system~\eqref{eq:deterministicsystem}. This observation was first exploited for control purposes in~\cite{IM-PR:08}, where equation~\eqref{eq:hankel} was used to construct open-loop control policies for tracking. This was then extended in~\cite{JC-JL-FD:18}, where equation~\eqref{eq:hankel} was used in a receding horizon optimal control algorithm. We recall the latter approach below.

Given a time horizon $\Tf \in \integerspositive$, past input/output data $\col(\hat{u}_{\textup{ini}},\hat{y}_{\textup{ini}})\in \real^{(m+p)\Tini}$, objective function $f\colon \real^{m\Tf}\times\real^{p\Tf}\to \real$, we formulate the following optimization problem:
\begin{align}\label{eq:DeePC}
\underset{g}{\text{minimize}}\quad
& f(\Ufhat g,\Yfhat g)\nonumber \\
\text{subject to\quad}
& \begin{pmatrix}
\Uphat \\ \Yphat
\end{pmatrix}g
=\begin{pmatrix}
\hat{u}_{\textup{ini}} \\ 
\hat{y}_{\textup{ini}}
\end{pmatrix}\\
&\Ufhat g \in\mathcal{U}\nonumber.
\end{align}
Note that optimization problem~\eqref{eq:DeePC} is equivalent to~\eqref{eq:optfunction} since by~\eqref{eq:hankel}, $u=\Ufhat g$ and $y=\Yfhat g$. The data-enabled predictive control (DeePC) algorithm is then given as follows:
\begin{algorithm}
\caption{Deterministic DeePC}\label{alg:DeePC}
\textbf{Input:} trajectory $\col(\hat{u},\hat{y})\in\real^{(m+p)T}$ with $\hat{u}$ persistently exciting of order $\Tini+\Tf+n$, most recent input/output measurements $\col(\hat{u}_{\textup{ini}},\hat{y}_{\textup{ini}})\in\real^{(m+p)\Tini}$
\begin{enumerate}
\item Solve~\eqref{eq:DeePC} for $g^{\star}$.\label{step:DeePCbegin}
\item Compute the optimal input sequence $u^{\star}=\Ufhat g^{\star}$.
\item Apply input $(u(t),\dots,u(t+s))=(u_0^{\star},\dots,u_s^{\star})$ for some $s\leq \Tf-1$.
\item Set $t$ to $t+s$ and update $\hat{u}_{\textup{ini}}$ and $\hat{y}_{\textup{ini}}$ to the $\Tini$ most recent input/output measurements.\label{step:DeePCend}
\item Return to~\ref{step:DeePCbegin}.
\end{enumerate}
\end{algorithm}

This algorithm was shown to be equivalent to the classical Model Predictive Control (MPC) algorithm when considering deterministic LTI systems of the form~\eqref{eq:deterministicsystem}, also in the presence of additional output constraints. 
When Algorithm~\ref{alg:DeePC} was applied to a nonlinear stochastic quadcopter model in~\cite{JC-JL-FD:18}, additional heuristic regularization terms had to be included in~\eqref{eq:DeePC} to achieve robust performance. 
We recall the regularizations below.
\begin{align}\label{eq:heuristicDeePC}
\underset{g}{\text{minimize}}\quad
& f(\Ufhat g,\Yfhat g)+\lambda_{\textup{ini}}\|\Yphat g-\hat{y}_{\textup{ini}}\|_1+\lambda_g\|g\|_1\nonumber \\
\text{subject to\quad}
& \Uphat g = \hat{u}_{\textup{ini}}\\
&\Ufhat g\in\mathcal{U}\nonumber,
\end{align}
where $\lambda_{\textup{ini}},\lambda_g\in\realnonnegative$.
We will show that this regularized optimization problem coincides with solving a distributionally robust variation of optimization problem~\eqref{eq:optexpectation}, giving rigorous justification  for the regularizations (see Section~\ref{sec:mainresults}).

\subsection{Distributionally Robust DeePC}\label{sec:robustoptimization}
Consider now the stochastic system~\eqref{eq:stochasticsystem}. Let $\col(\hat{u},\hat{y})\in\real^{(m+p)T}$ be a measured trajectory of length $T$ of system~\eqref{eq:stochasticsystem} such that $\hat{u}$ is persistently exciting of order $\Tini+\Tf+n$, where $\Tini\geq \ell$. Furthermore, assume $\col(\hat{u}_{\textup{ini}},\hat{y}_{\textup{ini}})\in\real^{(m+p)\Tini}$ is the last measured trajectory of system~\eqref{eq:stochasticsystem}. The output trajectories $\hat{y}$ and $\hat{y}_{\textup{ini}}$ can be viewed as particular realizations of random variables, $y$ and $\yini$. If we naively use the noise corrupted trajectory $\hat{y}$ to build Hankel matrices $\Uphat$, $\Yphat$, $\Ufhat$, $\Yfhat$ as in~\eqref{eq:UpUfYpYf} and apply Algorithm~\ref{alg:DeePC}, we will run into difficulties regarding the consistency of the constraint equations in~\eqref{eq:DeePC}. Indeed, since $\hat{y}_{\textup{ini}}$ and $\Yphat$ consist of noise corrupted outputs, there may not exist $g$ that satisfies the equation $\Yphat g=\hat{y}_{\textup{ini}}$. Hence, we soften the equality constraint and penalize the slack variable with an appropriate cost function; we recall that the use of such ``soft constraints'' is common in MPC~\cite{MZ-MM-CJ:14}. This results in the optimization problem
\begin{align*}
\underset{g}{\text{minimize}}\quad
& f(\Ufhat g,\Yfhat g)+\lambda_{\textup{ini}}\|\Yphat g-\hat{y}_{\textup{ini}}\|_1 \\
\text{subject to\quad}
& \Uphat g=\hat{u}_{\textup{ini}}\\
&\Ufhat g\in\mathcal{U},
\end{align*}
where we have used the 1-norm as a penalty function on the slack variable. It is well known that if $\lambda_{\textup{ini}}$ is chosen large enough then the solution $g^{\star}$ will violate $\Yphat g^{\star}=\hat{y}_{\textup{ini}}$ only if  the equation is infeasible~\cite{EK-JM:00}. Hence, the original constraint will be satisfied if it can be satisfied. Since the constraint is now deterministic, we may define $G = \{g\in\real^{T-\Tini-\Tf+1}\mid \Uphat g=\hat{u}_{\textup{ini}},\;\Ufhat g\in\mathcal{U}\}$ and rewrite the above as
\begin{equation}\label{eq:optsoft}
\underset{g\in G}{\text{minimize}}\quad
f(\Ufhat g,\Yfhat g)+\lambda_{\textup{ini}}\|\Yphat g-\hat{y}_{\textup{ini}}\|_1.
\end{equation}
For ease of notation, we put all random objects into a matrix whose $j$-th row we denote by $\xi_j$, i.e.,
\[
\xi_j \defeq \begin{pmatrix}
\Yp &\yini \\
\Yf &0
\end{pmatrix}_{j,\cdot},
\]
where $\begin{pmatrix}\cdot\end{pmatrix}_{j,\cdot}$ denotes the $j$-th row of a matrix. For all $j\in\{1,\dots,p(\Tini+\Tf)\}$, let $\Prob_j$ denote the probability distribution of $\xi_j$ supported on $\Xi_j\subseteq\real^{T-\Tini-\Tf+2}$. Note that the distributions $\Prob_j$ and their support sets $\Xi_j$ are determined by the dynamics of system~\eqref{eq:stochasticsystem} and the unknown distribution $\Prob_v$. Define $\xi = \col(\xi_1^T,\dots,\xi_{p(\Tini+\Tf)}^T)$ and let $\Prob\defeq\Prob_1\times\cdots\times \Prob_{p(\Tini+\Tf)}$ denote the unknown probability distribution of $\xi$ supported on $\Xi=\prod_{j=1}^{p(\Tini+\Tf)}\Xi_j\subseteq \real^{p(\Tini+\Tf)(T-\Tini-\Tf+2)}$. 
Let $H=\{h=\col(g,-1)\mid \Uphat g=\hat{u}_{\textup{ini}},\;\Ufhat g\in\mathcal{U}\}$. Substituting this notation into~\eqref{eq:optsoft} yields
\begin{align*}
\underset{h\in H}{\text{minimize}}\quad
&f((\Ufhat,0) h,(\hat{\xi}_{p\Tini+1}h,\dots,\hat{\xi}_{p(\Tini+\Tf)}h))\nonumber \\
&+\lambda_{\textup{ini}}\|(\hat{\xi}_1h,\dots,\hat{\xi}_{p\Tini}h)\|_1\},
\end{align*}
where $\hat{\xi}=\col(\hat{\xi}_1,\dots,\hat{\xi}_{p(\Tini+\Tf)})$ are our measurements. Denoting the empirical distribution of our measurements $\hat{\xi}$ by $\widehat{\Prob}=\delta_{\hat{\xi}}$, and substituting this notation into the above yields
\begin{align}\label{eq:insample}
\underset{h\in H}{\text{minimize}}\quad
&\Exp_{\widehat{\Prob}}\{c(\xi,h)\},
\end{align}
where $c(\xi,h)=f((\Ufhat,0) h,(\xi_{p\Tini+1}h,\dots,\xi_{p(\Tini+\Tf)}h))+\lambda_{\textup{ini}}\|(\xi_1h,\dots,\xi_{p\Tini}h)\|_1$.

The quantity in~\eqref{eq:insample} is known as the \emph{in-sample performance}. Unfortunately, the solution $h^{\star}$ to~\eqref{eq:insample} may result in poor \emph{out-of-sample performance} 
\begin{equation}\label{eq:outofsample}
\Exp_{\Prob}\{c(\xi,h^{\star})\},
\end{equation}
which if we compare to equation~\eqref{eq:optexpectation} is the real quantity of interest (see, e.g.,~\cite{PME-DK:18,DB-VG-NK:17} for examples displaying poor out-of-sample performance). 

To alleviate this problem, we focus instead on a robust variation of~\eqref{eq:insample} which will serve as an upper bound for out-of-sample performance~\eqref{eq:outofsample} with high confidence.
In particular, we seek solutions of
\begin{equation}\label{eq:robustproblem}
\inf_{h\in H}\sup_{Q\in\widehat{\mathcal{P}}}\Exp_Q\{c(\xi,h)\},
\end{equation}
where $\widehat{\mathcal{P}}$ is an \emph{ambiguity set} which depends on the sampled data trajectories $\hat{\xi}$. The ambiguity set will be constructed in such a way that it contains the actual distribution $\Prob$ with high confidence. Thus, if $h^{\star}$ and $J^{\star}$ are the solution and optimal value of~\eqref{eq:robustproblem}, then $J^{\star}$ will upper bound the out-of-sample performance with high confidence. 

Following~\cite{PME-DK:18}, we define the ambiguity set in~\eqref{eq:robustproblem} in terms of the Wasserstein metric defined on the space $\mathcal{M}(\Xi)$ denoting the set of all distributions $Q$ supported on $\Xi$ such that $\Exp_Q[\|\xi\|_{\textup{W}}]<\infty$, where $\|\cdot\|_{\textup{W}}$ is an arbitrary norm.
\begin{definition}
The \emph{Wasserstein metric} $\textup{d}_\textup{W}\colon\mathcal{M}(\Xi)\times\mathcal{M}(\Xi)\to\realnonnegative$ is defined as
\[
\textup{d}_\textup{W}(Q_1,Q_2)\defeq\inf_{\Pi}\left\{\int_{\Xi^2}\|\xi_1-\xi_2\|_{\textup{W}}\Pi(\textup{d}\xi_1,\textup{d}\xi_2)\right\},
\]
where $\Pi$ is a joint distribution of $\xi_1$ and $\xi_2$ with marginal distributions $Q_1\in \mathcal{M}(\Xi)$ and $Q_2\in\mathcal{M}(\Xi)$ respectively.
\end{definition}
\smallskip

Let $\eps\geq0$. We denote the \emph{Wasserstein ball of radius $\eps$ centred around distribution $Q$} by $B_{\eps}(Q)\defeq\{Q'\in\mathcal{M}(\Xi)\mid \textup{d}_{\textup{W}}(Q,Q')\leq \eps\}$. The Wasserstein metric can be viewed as a distance between probability distributions, where the distance is calculated via an optimal mass transport plan $\Pi$. Note that there are other ways to construct ambiguity sets (see, e.g.,~\cite{DB-VG-NK:17} where the ambiguity set is constructed as the confidence region of a goodness-of-fit hypothesis test). Replacing the general ambiguity set $\widehat{\mathcal{P}}$ with a Wasserstein ball around the empirical distribution $\widehat{\Prob}$ results in the problem
\begin{equation}\label{eq:optwasserstein}
\inf_{h\in H}\sup_{Q\in B_{\eps}(\widehat{\Prob})}\Exp_Q\{c(\xi,h)\}.
\end{equation}
In the next section, we show that if $h^{\star}$ and $J^{\star}$ are the solution and optimal value of~\eqref{eq:optwasserstein}, then $J^{\star}$ will upper bound the out-of-sample performance~\eqref{eq:outofsample} with high confidence. Hence, we obtain probabilistic guarantees that applying control inputs $u^{\star}=\Ufhat h^{\star}$ to system~\eqref{eq:stochasticsystem} will result in good performance of the resulting stochastic trajectory $y$.
Additionally, we show that~\eqref{eq:optwasserstein} is computationally tractable and results in a regularized version of the DeePC algorithm similar to~\eqref{eq:heuristicDeePC}.

\section{Main Results}\label{sec:mainresults}
The following result relates the robust problem~\eqref{eq:optwasserstein} to the out-of-sample performance~\eqref{eq:outofsample}. In particular, if $h^{\star}$ and $J^{\star}$ are the solution and the optimal value of the robust problem~\eqref{eq:optwasserstein}, then $J^{\star}$ upper bounds the out-of-sample performance~\eqref{eq:outofsample} with high confidence.
\begin{theorem}\label{thm:performanceguarantee}\longthmtitle{Robust Performance Guarantee}
Assume that distribution $\Prob$ is light-tailed, i.e., there exists $a>1$ such that $\Exp_{\Prob}[e^{\|\xi\|_{\textup{W}}^a}]<\infty$. Let $\beta\in(0,1)$. Then there exists $\eps=\eps(\beta)>0$ such that for all $h\in H$,
\[
\Prob\left\{\Exp_{\Prob}\{c(\xi,h)\}\leq\sup_{Q\in B_{\eps}(\widehat{\Prob})}\Exp_Q\{c(\xi,h)\}\right\}\geq 1-\beta.
\]
\end{theorem}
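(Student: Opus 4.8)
The plan is to reduce the probabilistic inequality to a measure-concentration bound on the Wasserstein distance between the true distribution $\Prob$ and the empirical distribution $\widehat{\Prob}=\delta_{\hat{\xi}}$, for which I would invoke the concentration result underlying the Wasserstein ambiguity sets of~\cite{PME-DK:18}. The crucial observation is that the event appearing in the theorem is implied by the simpler, $h$-independent event that $\Prob$ itself lies in the Wasserstein ball $B_{\eps}(\widehat{\Prob})$. Indeed, if $\textup{d}_{\textup{W}}(\Prob,\widehat{\Prob})\leq\eps$, then $\Prob\in B_{\eps}(\widehat{\Prob})$, so $\Prob$ is feasible for the inner supremum in~\eqref{eq:optwasserstein}, and hence $\Exp_{\Prob}\{c(\xi,h)\}\leq\sup_{Q\in B_{\eps}(\widehat{\Prob})}\Exp_Q\{c(\xi,h)\}$ for every $h\in H$ simultaneously.

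By monotonicity of $\Prob$, this deterministic inclusion of events gives, for each fixed $h\in H$,
\[
\Prob\left\{\Exp_{\Prob}\{c(\xi,h)\}\leq\sup_{Q\in B_{\eps}(\widehat{\Prob})}\Exp_Q\{c(\xi,h)\}\right\}\geq\Prob\left\{\textup{d}_{\textup{W}}(\Prob,\widehat{\Prob})\leq\eps\right\}.
\]
Since the governing event $\{\textup{d}_{\textup{W}}(\Prob,\widehat{\Prob})\leq\eps\}$ does not reference $h$, a single radius $\eps$ works uniformly across $H$, which is in fact stronger than the per-$h$ guarantee claimed. It therefore suffices to choose $\eps$ so that $\Prob\{\textup{d}_{\textup{W}}(\Prob,\widehat{\Prob})>\eps\}\leq\beta$.

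To control this tail I would appeal to the measure concentration inequality for the empirical Wasserstein distance: under the light-tail hypothesis $\Exp_{\Prob}[e^{\|\xi\|_{\textup{W}}^a}]<\infty$ with $a>1$, there exist constants $c_1,c_2>0$, depending only on $\Prob$, the exponent $a$, and the dimension of $\xi$, such that $\Prob\{\textup{d}_{\textup{W}}(\Prob,\widehat{\Prob})\geq\eps\}$ is bounded above by an explicit function of $\eps$ that is decreasing in $\eps$ and tends to $0$ as $\eps\to\infty$. Equating this upper bound with $\beta$ and solving for $\eps$ then yields a finite radius $\eps=\eps(\beta)>0$; the monotonicity and vanishing of the bound guarantee the existence of such an $\eps$ for every $\beta\in(0,1)$. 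Combining this with the displayed inequality completes the argument.

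The two set-inclusion steps are elementary, so I expect the only substantive step to be the correct invocation of the concentration bound. The light-tail assumption is precisely the hypothesis under which the cited inequality holds, so the remaining work reduces to matching notation and checking that the dimension-dependent constants apply to $\xi$ supported on $\Xi\subseteq\real^{p(\Tini+\Tf)(T-\Tini-\Tf+2)}$. A minor subtlety worth flagging is that here $\widehat{\Prob}$ is built from a single measured trajectory, so the concentration bound is applied with one sample; this affects only the size of $\eps(\beta)$, not the validity of the statement.
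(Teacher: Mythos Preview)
Your proposal is correct and mirrors the paper's approach: the paper does not give an independent argument but simply states that the result follows directly from \cite[Theorem~3.5]{PME-DK:18}, whose proof proceeds exactly as you outline---first reducing to the $h$-independent event $\Prob\in B_{\eps}(\widehat{\Prob})$ and then invoking the Fournier--Guillin-type concentration inequality for the empirical Wasserstein distance under the light-tail hypothesis. Your observation about the single-sample nature of $\widehat{\Prob}$ is apt and, as you note, only affects the magnitude of $\eps(\beta)$.
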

\smallskip
The proof of the above theorem follows directly from~\cite[Theorem 3.5]{PME-DK:18}. Note that the light-tailed assumption is satisfied automatically when $\Xi$ is compact. Hence, all distributions truncated to a compact support set satisfy the assumptions of Theorem~\ref{thm:performanceguarantee}. Other examples include Gaussian and exponential distributions.

By adapting the proof methods of~\cite{PME-DK:18} and~\cite{SSA-DK-PME:17} to our setting, we show that for certain objective functions, the semi-infinite optimization problem~\eqref{eq:optwasserstein} reduces to a tractable convex program that coincides with a regularized version of~\eqref{eq:insample}. Let $\|\cdot\|$ be an arbitrary norm on $\real^{T-\Tini-\Tf+2}$ and $\|\xi\|_{\textup{W}}=\sum_{j=1}^{p(\Tini+\Tf)}\|\xi_j\|$ be the norm used in the Wasserstein metric.
\begin{theorem}\longthmtitle{Tractable Reformulation}\label{thm:reformulation}
Assume that the objective function $f$ is separable and can be written as $f(u,y)=f_1(u)+f_2(y)$ for all $u,y$, where $f_1$ and $f_2$ are convex and continuous. Furthermore, assume $f_2$ is such that $\Theta_2=\{\theta\mid f_2^{\ast}(\theta)<\infty\}$ is a bounded set in $\real^{p\Tf}$. Then
\begin{align*}
&\underset{Q\in B_{\eps}(\widehat{\Prob})}{\text{sup}}\Exp_{Q}\left\{c(\xi,h)\right\}\nonumber\\
&\leq c(\hat{\xi},h)+ \eps\cdot \max\left\{\sup_{\theta\in\Theta_2}\|\theta\|_{\infty}\|\col(g,0)\|_{\ast},\lambda_{\textup{ini}}\|h\|_{\ast}\right\},
\end{align*}
where $\|\cdot\|_{\ast}$ denotes the dual norm of $\|\cdot\|$. Equality holds when $\Xi_j=\real^{T-\Tini-\Tf+2}$ for all $j\in\{1,\dots,p\Tini\}$ and $\Xi_j=\real^{T-\Tini-\Tf+1}\times\{0\}$ for all $j\in\{p\Tini+1,\dots,p(\Tini+\Tf)\}$.
\end{theorem}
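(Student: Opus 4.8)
The plan is to dualize the inner worst-case expectation into a single deterministic maximization over the support $\Xi$, and then evaluate that maximization in closed form by exploiting the additive structure of both the cost $c(\cdot,h)$ and the Wasserstein norm $\|\cdot\|_{\textup{W}}$ across the $p(\Tini+\Tf)$ blocks $\xi_j$. First I would invoke the strong-duality reformulation for Wasserstein distributionally robust expectations from \cite{PME-DK:18}. Since the empirical distribution is the single Dirac $\widehat{\Prob}=\delta_{\hat{\xi}}$, this gives
\[
\sup_{Q\in B_{\eps}(\widehat{\Prob})}\Exp_Q\{c(\xi,h)\}=\inf_{\lambda\geq 0}\Big[\lambda\eps+\sup_{\xi\in\Xi}\big(c(\xi,h)-\lambda\|\xi-\hat{\xi}\|_{\textup{W}}\big)\Big],
\]
where convexity and continuity of $c(\cdot,h)$ (inherited from $f_1$, $f_2$, and the $1$-norm) supply the hypotheses.

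Because $\|\xi-\hat{\xi}\|_{\textup{W}}=\sum_j\|\xi_j-\hat{\xi}_j\|$ and $c$ splits into the deterministic term $f_1((\Ufhat,0)h)$, the output term $f_2$ coupling only the blocks $j\in\{p\Tini+1,\dots,p(\Tini+\Tf)\}$, and the $1$-norm penalty coupling only $j\in\{1,\dots,p\Tini\}$, the inner supremum decouples into two independent block maximizations.

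For the penalty blocks I would substitute $\zeta_j=\xi_j-\hat{\xi}_j$, write $|\xi_j h|=\max(\xi_j h,-\xi_j h)$, and apply the elementary identity that $\sup_{\zeta}[\langle a,\zeta\rangle-\lambda\|\zeta\|]$ equals $0$ if $\|a\|_{\ast}\leq\lambda$ and $+\infty$ otherwise; these blocks collapse to $\lambda_{\textup{ini}}\|(\hat{\xi}_1 h,\dots,\hat{\xi}_{p\Tini}h)\|_1$ subject to the dual constraint $\lambda\geq\lambda_{\textup{ini}}\|h\|_{\ast}$. For the output blocks, following \cite{SSA-DK-PME:17}, I would replace $f_2$ by its biconjugate $f_2(y)=\sup_{\theta\in\Theta_2}[\langle\theta,y\rangle-f_2^{\ast}(\theta)]$ (Fenchel--Moreau, valid by convexity and continuity), interchange the two suprema (both are suprema, so the swap is unconditional), and again apply the dual-norm identity blockwise. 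Here the support condition $\Xi_j\subseteq\real^{T-\Tini-\Tf+1}\times\{0\}$ forces the last coordinate of each $\xi_j$ to vanish, so the active linear functional is $\langle\col(g,0),\cdot\rangle$ rather than $\langle h,\cdot\rangle$ --- this is precisely why $\col(g,0)$, not $h$, appears. These blocks collapse to $f_2$ at the measured outputs subject to $\lambda\geq\sup_{\theta\in\Theta_2}\|\theta\|_{\infty}\|\col(g,0)\|_{\ast}$. Boundedness of $\Theta_2$ is essential here: it keeps this constraint finite, equivalently it makes $f_2$ globally Lipschitz, so the worst case stays finite for every $\eps$.

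Combining, the inner supremum equals $c(\hat{\xi},h)$ whenever $\lambda\geq\kappa\defeq\max\{\lambda_{\textup{ini}}\|h\|_{\ast},\ \sup_{\theta\in\Theta_2}\|\theta\|_{\infty}\|\col(g,0)\|_{\ast}\}$ and is $+\infty$ otherwise, so the outer $\inf_{\lambda\geq 0}$ is attained at $\lambda=\kappa$ and returns $c(\hat{\xi},h)+\eps\kappa$, the claimed value. The general upper bound then follows because shrinking each $\Xi_j$ can only shrink the inner supremum, while the equality case corresponds to the maximal admissible supports, which force $\lambda\geq\kappa$. I expect the main obstacle to lie in the output blocks: verifying that the supremum-interchange and blockwise dual-norm evaluation remain exact under the restricted support $\real^{T-\Tini-\Tf+1}\times\{0\}$ --- in particular that the dual norm of $\col(g,0)$ taken over that subspace coincides with the full dual norm $\|\col(g,0)\|_{\ast}$ --- and in handling the edge case where $\Theta_2$ is not closed, so that the constraint $\lambda\geq\sup_{\theta\in\Theta_2}\|\theta\|_{\infty}\|\col(g,0)\|_{\ast}$ is approached but not attained.
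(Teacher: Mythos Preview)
Your proposal is correct and follows essentially the same route as the paper: invoke the strong-duality reformulation from \cite{PME-DK:18} to reduce the worst-case expectation to $\inf_{\lambda\geq 0}[\lambda\eps+\sup_{\xi\in\Xi}(c(\xi,h)-\lambda\|\xi-\hat\xi\|_{\textup{W}})]$, split the inner supremum using the separability of both $c$ and $\|\cdot\|_{\textup{W}}$ across the $\xi_j$-blocks, and evaluate each piece via Fenchel conjugacy and the dual-norm identity. The paper packages the block computation into a standalone lemma (an extension of \cite[Lemma~A.3]{SSA-DK-PME:17}) that it applies uniformly to both $f_2$ and $f_3\defeq\lambda_{\textup{ini}}\|\cdot\|_1$; you instead handle the $1$-norm blocks by hand via $|x|=\max(x,-x)$ and treat only $f_2$ through the biconjugate. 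One small technical difference: the paper's lemma rewrites $-\lambda\|\zeta_j-\hat\zeta_j\|$ as an infimum over the dual ball and then invokes a minimax theorem to interchange a $\sup$ and an $\inf$, whereas your argument swaps two suprema (over $\xi$ and over the Fenchel variable $\theta$), which is unconditional and slightly cleaner. The subtleties you flag---that on the restricted support $\real^{T-\Tini-\Tf+1}\times\{0\}$ the effective linear functional is $\col(g,0)$ and that one must check the resulting dual norm agrees with $\|\col(g,0)\|_\ast$---are real; the paper handles them only by the phrase ``carrying through the steps of the proof'' of its lemma, so your level of care here matches the original.
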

\smallskip

Note that the condition for equality holds when the disturbance $v$ affecting system~\eqref{eq:stochasticsystem} is drawn from a distribution $\Prob_v$ defined on an unbounded support set and the matrix $F$ in~\eqref{eq:stochasticsystem} is full row rank. In other words, equality will hold if the probability distribution $\Prob_v$ has unbounded support and disturbances $v(t)$ affect all entries of the output vector $y(t)$. In this case, each probability distribution $\Prob_j$ would have support $\Xi_j=\real^{T-\Tini-\Tf+2}$ for $j\in\{1,\dots,p\Tini\}$ and $\Xi_j=\real^{T-\Tini-\Tf+1}\times\{0\}$ for $j\in\{p\Tini+1,\dots,p(\Tini+\Tf)\}$. For example, we would obtain equality in the above if $F$ is full row rank and $\Prob_v$ is a Gaussian distribution which is a common assumption in stochastic MPC (see~\cite{AM:16} and references therein). We also see an immediate connection between Theorem~\ref{thm:reformulation} and~\eqref{eq:heuristicDeePC}. In fact, when $\|\cdot\|_{\ast}=\|\cdot\|_1$ then $\eps\cdot \max\{\sup_{\theta\in\Theta_2}\|\theta\|_{\infty}\|\col(g,0)\|_{\ast},\lambda_{\textup{ini}}\|h\|_{\ast}\}=\eps\cdot \max\{\sup_{\theta\in\Theta_2}\|\theta\|_{\infty}\|g\|_{1},\lambda_{\textup{ini}}(\|g\|_{1}+1)\}$. Hence, depending on the known cost function $f_2$ (i.e., on the set $\Theta_2$), $\lambda_g$ in~\eqref{eq:heuristicDeePC} plays the role of either $\eps\cdot\sup_{\theta\in\Theta_2}\|\theta\|_{\infty}$ or $\eps\lambda_{\textup{ini}}$. Thus, the result of Theorem~\ref{thm:reformulation} gives insight into the design of the regularizer on the decision variable $g$ to achieve various robustness goals for DeePC. The results above give rise to the robust DeePC algorithm.
\begin{algorithm}
\caption{Robust DeePC}\label{alg:robustDeePC}
\textbf{Input:} trajectory $\col(\hat{u},\hat{y})\in\real^{(m+p)T}$ with $\hat{u}$ persistently exciting of order $\Tini+\Tf+n$, most recent input/output measurements $\col(\hat{u}_{\textup{ini}},\hat{y}_{\textup{ini}})\in\real^{(m+p)\Tini}$
\begin{enumerate}
\item Set $h^{\star}$ equal to
\begin{align*}
&\underset{h\in H}{\argmin}\left\{c(\hat{\xi},h)\right.\\
&\left.+\eps\cdot \max\left\{\sup_{\theta\in\Theta_2}\|\theta\|_{\infty}\|\col(g,0)\|_{\ast},\lambda_{\textup{ini}}\|h\|_{\ast}\right\}\right\}.
\end{align*}
\label{step:DeePCbegin}
\item Compute the optimal input sequence $u^{\star}=\Ufhat g^{\star}$, where $h^{\star}=\col(g^{\star},-1)$.
\item Apply input $(u(t),\dots,u(t+s))=(u_0^{\star},\dots,u_s^{\star})$ for some $s\leq \Tf-1$.
\item Set $t$ to $t+s$ and update $\hat{u}_{\textup{ini}}$ and $\hat{y}_{\textup{ini}}$ to the $\Tini$ most recent input/output measurements.\label{step:DeePCend}
\item Return to~\ref{step:DeePCbegin}.
\end{enumerate}
\end{algorithm}
\begin{remark}\label{rem:radius}
The Wasserstein ball radius $\eps(\beta)$ in Theorem~\ref{thm:performanceguarantee} is often larger than necessary, i.e., $\Prob\not\in B_{\eps}(\widehat{\Prob})$ with probability much less than $\beta$. Furthermore, even when $\Prob\not\in B_{\eps}(\widehat{\Prob})$, the robust quantity $\sup_{Q\in B_{\eps}(\widehat{\Prob})}\Exp_Q\{c(\xi,h)\}$ may still serve as an upper bound for the out-of-sample performance $\Exp_{\Prob}\{c(\xi,h)\}$~\cite{PME-DK:18}. Thus, for practical purposes, one should choose the radius $\eps$ of the Wasserstein ball in a data-driven fashion (see Section~\ref{sec:simulations}).\oprocend
\end{remark}
\begin{remark}
The assumption that $\Theta_2=\{\theta\mid f_2^{\ast}(\theta)<\infty\}$ is a bounded set in $\real^{p\Tf}$ holds for many objective functions of practical interest. Indeed, any arbitrary norm satisfies this assumption, and in fact any Lipschitz continuous function.\oprocend
\end{remark}
\begin{remark}
The norm $\sum_{j=1}^{p(\Tini+\Tf)}\|\xi_j\|$ used in the Wasserstein metric can be chosen to achieve various robustness goals. Indeed, being robust in the trajectory space in the $\|\cdot\|_{\infty}$ sense requires regularizing with a $\|\cdot\|_1$. Likewise, being robust in the trajectory space in the $\|\cdot\|_{2}$ sense requires regularizing with a $\|\cdot\|_2$.
If no noise is present in the system, we may set $\eps=0$ recovering the DeePC algorithm for deterministic LTI systems.\oprocend
\end{remark}

To prove the theorem above, we require the following lemma which is an extension of~\cite[Lemma A.3]{SSA-DK-PME:17} to functions with vector valued inputs:
\begin{lemma}\label{lem:regularizer}
Let $q,r\in\integerspositive$, and $\Omega_j\subseteq \real^q$ for $j\in\{1,\dots,r\}$. Let $\hat{\zeta}_j\in \Omega_j$ be given for $j\in\{1,\dots,r\}$, $\lambda\in\realpositive$ and $L\colon \real^r\to\real$ convex and continuous such that $\{\theta\mid L^{\ast}(\theta)<\infty\}$ is a bounded set in $\real^{r}$. Then, for fixed $b\in\real^q$,
\begin{align*}
&\underset{\underset{\forall j\in\{1,\dots,r\}}{\zeta_j\in \Omega_j}}\sup L(\zeta_1^Tb,\dots,\zeta_r^Tb)-\lambda\sum_{j=1}^{r}\|\zeta_j-\hat{\zeta}_j\| \\
&\leq\begin{cases}
L(\hat{\zeta}_1^Tb,\dots,\hat{\zeta}_r^Tb) &\textup{if}\;\sup_{\theta\in\Theta}\|\theta\|_{\infty}\|b\|_{\ast}\leq \lambda\\
\infty &\textup{otherwise},
\end{cases}
\end{align*}
where $\|\cdot\|$ is an arbitrary norm on $\real^q$. Furthermore, the above is an equality when $\Omega_j=\real^q$ for all $j\in\{1,\dots,r\}$.
\end{lemma}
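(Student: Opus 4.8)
The plan is to dualize the outer function $L$ via conjugate duality and then interchange the two suprema. Since $L\colon\real^r\to\real$ is finite-valued, convex, and continuous, it is proper and closed, so the Fenchel--Moreau theorem gives $L=L^{\ast\ast}$; writing $\Theta\defeq\{\theta\mid L^{\ast}(\theta)<\infty\}$ for the (by hypothesis bounded) effective domain of $L^{\ast}$, I can represent $L(\zeta_1^Tb,\dots,\zeta_r^Tb)=\sup_{\theta\in\Theta}\big(\sum_{j=1}^r\theta_j\,\zeta_j^Tb-L^{\ast}(\theta)\big)$. Substituting this into the left-hand supremum and using that the supremum over $\zeta$ and the supremum over $\theta$ commute (both are suprema, so no minimax gap arises), I obtain
\[
\sup_{\theta\in\Theta}\Big(-L^{\ast}(\theta)+\sum_{j=1}^r\ \sup_{\zeta_j\in\Omega_j}\big(\theta_j\,\zeta_j^Tb-\lambda\|\zeta_j-\hat\zeta_j\|\big)\Big),
\]
where the crucial point is that, after fixing $\theta$, the inner maximization decouples across the index $j$.

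Next I would evaluate each decoupled inner supremum. Shifting variables by $\zeta_j=\hat\zeta_j+\delta_j$, the $j$-th term becomes $\theta_j\hat\zeta_j^Tb+\sup_{\delta_j}\big(\langle\theta_j b,\delta_j\rangle-\lambda\|\delta_j\|\big)$, where in the equality case $\Omega_j=\real^q$ the shift $\delta_j$ ranges freely over $\real^q$. The remaining supremum is exactly the conjugate of the scaled norm $\lambda\|\cdot\|$ evaluated at $\theta_j b$; a standard computation (scaling $\delta_j=t u$ with $\|u\|=1$ and $t\ge0$) shows it equals $0$ when $\|\theta_j b\|_{\ast}=|\theta_j|\,\|b\|_{\ast}\le\lambda$ and $+\infty$ otherwise. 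Hence each inner term equals $\theta_j\hat\zeta_j^Tb$ subject to the constraint $|\theta_j|\,\|b\|_{\ast}\le\lambda$, and is $+\infty$ when this constraint is violated. For a general $\Omega_j\subseteq\real^q$ the feasible set for $\delta_j$ only shrinks, so the inner supremum can only decrease, which yields the claimed inequality; taking $\Omega_j=\real^q$ restores equality.

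It then remains to perform the case analysis dictated by the stated threshold. If $\sup_{\theta\in\Theta}\|\theta\|_{\infty}\,\|b\|_{\ast}\le\lambda$, then every $\theta\in\Theta$ satisfies $|\theta_j|\,\|b\|_{\ast}\le\lambda$ for all $j$, so all the $+\infty$ penalties vanish and the outer supremum collapses to $\sup_{\theta\in\Theta}\big(\sum_j\theta_j\hat\zeta_j^Tb-L^{\ast}(\theta)\big)=L^{\ast\ast}(\hat\zeta_1^Tb,\dots,\hat\zeta_r^Tb)=L(\hat\zeta_1^Tb,\dots,\hat\zeta_r^Tb)$, again by Fenchel--Moreau. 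Conversely, if the threshold is exceeded, there exists $\theta^0\in\Theta$ with $|\theta_j^0|\,\|b\|_{\ast}>\lambda$ for some $j$; since $\theta^0$ lies in the effective domain, $L^{\ast}(\theta^0)<\infty$, and the corresponding inner term contributes $+\infty$, forcing the outer supremum to $+\infty$. This reproduces the two branches of the right-hand side. The degenerate case $b=0$ is consistent, since then the threshold holds automatically ($\lambda>0$) and both sides reduce to $L(0,\dots,0)$.

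The main obstacle I anticipate is bookkeeping rather than any deep difficulty: I must make sure the effective-domain restriction $\theta\in\Theta$ is legitimate in the biconjugate representation (which is exactly where the boundedness hypothesis on $\Theta$ and the continuity of $L$ enter), and I must line up the strict-versus-nonstrict inequality in the threshold precisely with the ``$\le\lambda$'' branch when combining the $+\infty$ penalty terms with the outer supremum. Everything else is a routine application of convex duality together with the identification of the conjugate of a norm with the indicator of the dual-norm ball.
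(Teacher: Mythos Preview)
Your argument is correct and follows the same conjugate-duality strategy as the paper: represent $L$ via its biconjugate on the effective domain $\Theta$, push the supremum over $\theta$ outside, and reduce each inner maximization to the conjugate of the scaled norm. The paper's proof reaches the identical representation $L(\zeta_1^Tb,\dots,\zeta_r^Tb)=\sup_{\theta\in\Theta}\langle\theta,(\zeta_1^Tb,\dots,\zeta_r^Tb)\rangle-L^{\ast}(\theta)$ through a more circuitous route (it first recomputes $L^{\ast}$ via a constrained reformulation and Lagrangian duality), and then handles the norm penalty by introducing dual variables $\mu_j$ with $\|\mu_j\|_{\ast}\le\lambda$ and invoking a minimax theorem to swap $\sup_{\zeta}$ with $\inf_{\mu}$. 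Your version sidesteps the minimax step entirely: since you only exchange two suprema (over $\zeta$ and over $\theta$), no saddle-point argument is needed, and the norm conjugate is read off directly. This makes your proof slightly shorter and conceptually cleaner, while the paper's formulation with explicit $\mu_j$ variables has the minor advantage of displaying the dual certificate more visibly. The inequality for general $\Omega_j$ and the case split on the threshold are handled identically in both proofs.
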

\begin{proof}
By definition of the conjugate function,
\begin{align*}
 L^{\ast}(z)&=\underset{\zeta\in \Omega}\sup \langle z,(\zeta_1^Tb,\dots,\zeta_r^Tb)\rangle-L(\zeta_1^Tb,\dots,\zeta_r^Tb)\\
 &=\underset{s,\zeta\in \Omega}\sup\left\{\sum_{j=1}^rz_j\zeta_j^Tb-L(s)\;\middle\vert \;\underset{\forall j\in\{1,\dots,r\}}{s_j=\zeta_j^T b}\right\},
\end{align*}
where $\zeta=\col(\zeta_1,\dots,\zeta_r)$, $z=\col(z_1,\dots,z_r)$, $s=\col(s_1,\dots,s_r)$, and $\Omega=\Omega_1\times\cdots\times\Omega_r$.
The Lagrangian of the above is given by
\[
\mathscr{L}(s,\zeta,\theta)=\sum_{j=1}^r\left(z_j\zeta_j^Tb+\theta_j(s_j-\zeta_j^T b)\right)-L(s),
\]
where $\theta=\col(\theta_1,\dots,\theta_r)$. By strong duality (see, e.g.,~\cite[Proposition 5.3.1]{DPB:09-book}),
\begin{align*}
L^{\ast}(z)&=\inf_{\theta}\sup_{s,\zeta\in\Omega}\sum_{j=1}^r\left(z_j\zeta_j^Tb+\theta_j(s_j-\zeta_j^T b)\right)-L(s)\\
&=\inf_{\theta}\sup_{\zeta\in\Omega}L^{\ast}(\theta)+\sum_{j=1}^r(z_j-\theta_j)\zeta_j^Tb\\
&=\inf_{\theta}L^{\ast}(\theta)+\sum_{j=1}^r\sup_{\zeta_j\in\Omega_j}(z_j-\theta_j)\zeta_j^Tb.
\end{align*}
Hence, by duality
\begin{align*}
L^{\ast}(z)&=\begin{cases}
\inf_{\theta} L^{\ast}(\theta)\\
\textup{s.t.}\;z_jb=\theta_jb, &\forall j\in\{1,\dots,r\}
\end{cases}\\
&=\begin{cases}
\inf_{\theta\in\Theta} L^{\ast}(\theta)\\
\textup{s.t.}\;z_jb=\theta_jb, &\forall j\in\{1,\dots,r\},
\end{cases}
\end{align*}
where $\Theta=\{\theta\mid L^{\ast}(\theta)<\infty\}$ is the effective domain of $L^{\ast}$. Since $L$ is convex and continuous, the  biconjugate $L^{\ast\ast}$ coincides with the function $L$ itself. Hence,
\begin{align*}
L(\zeta_1^Tb,\dots,\zeta_r^Tb)&=\sup_{z} \langle z,(\zeta_1^Tb,\dots,\zeta_r^Tb)\rangle-L^{\ast}(z)\\
&=\begin{cases}
\underset{z}\sup\langle z,(\zeta_1^Tb,\dots,\zeta_r^Tb)\rangle-\underset{\theta\in\Theta}\inf L^{\ast}(\theta)\\
\textup{s.t.}\;z_j b=\theta_j b, \quad\forall j\in\{1,\dots,r\}
\end{cases}\\
&=\sup_{\theta\in\Theta}\langle \theta,(\zeta_1^Tb,\dots,\zeta_r^Tb)\rangle-L^{\ast}(\theta).
\end{align*}
Thus,
\begin{align*}
&\sup_{\zeta\in\Omega}L(\zeta_1^Tb,\dots,\zeta_r^Tb)-\lambda\sum_{j=1}^r\|\zeta_j-\hat{\zeta}_j\|\\
&=\sup_{\zeta\in\Omega}\sup_{\theta\in\Theta}\langle\theta,(\zeta_1^Tb,\dots,\zeta_r^Tb)\rangle-L^{\ast}(\theta)-\lambda\sum_{j=1}^r\|\zeta_j-\hat{\zeta}_j\|\\
&=\sup_{\zeta\in\Omega}\sup_{\theta\in\Theta}\inf_{\underset{\forall j\in\{1,\dots,r\}}{\|\mu_j\|_{\ast}\leq\lambda}}\sum_{j=1}^r\left(\theta_j\zeta_j^Tb-\mu_j^T(\zeta_j-\hat{\zeta}_j)\right)-L^{\ast}(\theta),
\end{align*}
where the last equality comes from the definition of the dual norm and using homogeneity of the norm. Using the minimax theorem (see, e.g.,~\cite[Proposition 5.5.4]{DPB:09-book}) we switch the supremum and infimum in the above and bring it into the sum giving
\begin{align*}
&\sup_{\zeta\in\Omega}L(\zeta_1^Tb,\dots,\zeta_r^Tb)-\lambda\sum_{j=1}^r\|\zeta_j-\hat{\zeta}_j\|\\
&=\sup_{\theta\in\Theta}\inf_{\underset{\forall j\in\{1,\dots,r\}}{\|\mu_j\|_{\ast}\leq\lambda}}-L^{\ast}(\theta)+\sum_{j=1}^r\sup_{\zeta_j\in\Omega_j}\theta_j\zeta_j^Tb-\mu_j^T(\zeta_j-\hat{\zeta}_j)\\
&\leq \sup_{\theta\in\Theta}\inf_{\underset{\forall j\in\{1,\dots,r\}}{\|\mu_j\|_{\ast}\leq\lambda}}-L^{\ast}(\theta)+\sum_{j=1}^r\sup_{\zeta_j\in\real^q}\theta_j\zeta_j^Tb-\mu_j^T(\zeta_j-\hat{\zeta}_j),
\end{align*}
where equality holds when $\Omega_j=\real^q$ for all $j\in\{1,\dots,r\}$. Hence,
\begin{align*}
&\sup_{\zeta\in\Omega}L(\zeta_1^Tb,\dots,\zeta_r^Tb)-\lambda\sum_{j=1}^r\|\zeta_j-\hat{\zeta}_j\|\\
&\leq\sup_{\theta\in\Theta}\inf_{\underset{\forall j\in\{1,\dots,r\}}{\|\mu_j\|_{\ast}\leq\lambda}}\begin{cases}
\sum_{j=1}^r(\mu_j^T\hat{\zeta}_j)-L^{\ast}(\theta) &\textup{if}\;\underset{\forall j\in\{1,\dots,r\}}{\mu_j=\theta_jb}\\
\infty &\textup{otherwise}
\end{cases}\\
&=\sup_{\theta\in\Theta}\begin{cases}
\sum_{j=1}^r\langle \theta_jb,\hat{\zeta}_j\rangle-L^{\ast}(\theta) &\textup{if}\;\underset{\forall j\in\{1,\dots,r\}}{\|\theta_jb\|_{\ast}\leq\lambda}\\
\infty &\textup{otherwise}
\end{cases}\\
&=\begin{cases}
L(\hat{\zeta}_1^Tb,\dots,\hat{\zeta}_r^Tb) &\textup{if}\;\sup_{\theta\in\Theta}\|\theta\|_{\infty}\|b\|_{\ast}\leq\lambda\\
\infty &\textup{otherwise}.
\end{cases}
\end{align*}
This proves the claimed result.
\end{proof}
\begin{remark}
Lemma~\ref{lem:regularizer} may also be used to extend the results of~\cite{SSA-DK-PME:17} to learning problems in which the input data is matrix-valued. We do not explore this connection as this is beyond the scope of this paper.\oprocend
\end{remark}
\begin{proof}\longthmtitle{Theorem~\ref{thm:reformulation}}
Since $c(\cdot,h)$ is a proper, convex, continuous function for all $h\in H$ then
\begin{align*}
&\underset{Q\in B_{\eps}(\widehat{\Prob})}{\text{sup}}\Exp_{Q}\left\{c(\xi,h)\right\}\\
&=\underset{\lambda\geq 0}\inf\;\lambda\eps + \underset{\xi\in\Xi}\sup(c(\xi,h)-\lambda\|\xi-\hat{\xi}\|_{\textup{W}}).
\end{align*}
This can be shown by studying the dual problem, and noticing that the worst case distribution coincides with a Dirac distribution at the point $\xi$ which results in the largest norm $\|\xi-\hat{\xi}\|_{\textup{W}}$ (see~\cite[Theorem 6.3]{PME-DK:18}).
Define $f_3(\cdot)\defeq\lambda_{\textup{ini}}\|\cdot\|_1$. By separability of the objective function,
\begin{align*}
&\underset{Q\in B_{\eps}(\widehat{\Prob})}{\text{sup}}\Exp_{Q}\left\{c(\xi,h)\right\}\\
&=\underset{\lambda\geq 0}\inf\;\lambda\eps + f_1((\Uf,0) h)\\
&+\sup_{\xi}\{f_2(\xi_{p\Tini+1}h,\dots,\xi_{p(\Tini+\Tf)}h) \\
&+f_3(\xi_1h,\dots,\xi_{p\Tini}h)-\lambda\|\xi-\hat{\xi}\|_{\textup{W}})\}\\
&=\begin{cases}
\underset{\lambda\geq 0,s_1,s_2}\inf\;\lambda\eps + f_1((\Uf,0) h)+ s_2+s_3\\
\textup{s.t.}\;\underset{\xi\in\Xi}\sup f_2(\xi_{p\Tini+1}h,\dots,\xi_{p(\Tini+\Tf)}h)\\
\quad\quad\quad -\sum_{j=p\Tini+1}^{p(\Tini+\Tf)}\lambda\|\xi_j-\hat{\xi}_j\|\leq s_2\\
\quad\;\underset{\xi\in\Xi}\sup f_3(\xi_1h,\dots,\xi_{p\Tini}h)\\
\quad\quad\quad -\sum_{j=1}^{p\Tini}\lambda\|\xi_j-\hat{\xi}_j\|\leq s_3,
\end{cases}
\end{align*}
where we used the epigraph formulation and the definition of $\|\cdot\|_{\textup{W}}$ for the last equality.
By definition of the conjugate function, $f_3^{\ast}(\theta)=0$ if $\|\theta\|_{\infty}\leq \lambda_{\textup{ini}}$ and is infinite otherwise. Hence, $\Theta_3\defeq\{\theta\mid f_3^{\ast}(\theta)<\infty\}$ is a bounded set in $\real^{p\Tf}$. We also have that $\Theta_2\defeq\{\theta\mid f_2^{\ast}(\theta)<\infty\}$ by assumption. Hence, by Lemma~\ref{lem:regularizer},
\begin{align*}
&\underset{\xi\in\Xi}\sup f_3(\xi_1h,\dots,\xi_{p\Tini}h)-\sum_{j=1}^{p\Tini}\lambda\|\xi_j-\hat{\xi}_j\|\\
&\leq
\begin{cases}
f_3(\hat{\xi}_1h,\dots,\hat{\xi}_{p\Tini}h)  &\textup{if}\; \underset{\theta\in\Theta_3}\sup\|\theta\|_{\infty} \|h\|_{\ast}\leq\lambda\\
\infty &\textup{otherwise},
\end{cases}
\end{align*}
with equality when for all $j\in\{1,\dots,p\Tini\}$, $\Xi_j=\real^{T-\Tini-\Tf+2}$. Note that the uncertainties $\xi_{p\Tini+1},\dots,\xi_{p(\Tini+\Tf)}$ affecting $f_2$ have 0 as their last entry by definition. Hence, carrying through the steps of the proof of Lemma~\ref{lem:regularizer} for $f_2$ yields
\begin{align*}
&\underset{\xi\in\Xi}\sup f_2(\xi_{p\Tini+1}h,\dots,\xi_{p(\Tini+\Tf)}h)-\sum_{j=p\Tini+1}^{p(\Tini+\Tf)}\lambda\|\xi_j-\hat{\xi}_j\|\\
&\leq
\begin{cases}
f_2(\hat{\xi}_{p\Tini+1}h,\dots,\hat{\xi}_{p(\Tini+\Tf)}h)  &\textup{if}\; \underset{\theta\in\Theta_2}\sup\|\theta\|_{\infty} \|\tilde{h}\|_{\ast}\leq\lambda\\
\infty &\textup{otherwise},
\end{cases}
\end{align*}
where $\tilde{h}=\col(g,0)$ and equality holds when $\Xi_j=\real^{T-\Tini-\Tf+1}\times\{0\}$ for all $j\in\{p\Tini+1,\dots,p(\Tini+\Tf)\}$. By reversing the epigraph formulation, we have
\begin{align*}
&\underset{Q\in B_{\eps}(\widehat{\Prob})}{\text{sup}}\Exp_{Q}\left\{c(\xi,h)\right\}\\
&\leq\begin{cases}
\underset{\lambda\geq 0}\inf\;\lambda\eps + f_1((\Uf,0) h)+ f_2(\hat{\xi}_{p\Tini+1}h,\dots,\hat{\xi}_{p(\Tini+\Tf)}h)\\
\quad+ f_3(\hat{\xi}_1h,\dots,\hat{\xi}_{p\Tini}h)\\
\textup{s.t.}\;\sup_{\theta\in\Theta_2}\|\theta\|_{\infty} \|\tilde{h}\|_{\ast}\leq\lambda\\
\quad\;\;\sup_{\theta\in\Theta_3}\|\theta\|_{\infty} \|h\|_{\ast}\leq\lambda.
\end{cases}
\end{align*}
Note that the infinite case is dropped since we are taking the infimum over $\lambda\geq 0$. Substituting notation yields
\begin{align*}
&\underset{Q\in B_{\eps}(\widehat{\Prob})}{\text{sup}}\Exp_{Q}\left\{c(\xi,h)\right\}\\
&\leq c(\hat{\xi},h)+\eps\cdot \max\left\{\sup_{\theta\in\Theta_2}\|\theta\|_{\infty}\|\tilde{h}\|_{\ast},\lambda_{\textup{ini}}\|h\|_{\ast}\right\}.
\end{align*}
Noting that $\tilde{h}=\col(g,0)$ gives the claimed result.
\end{proof}
%
\section{Simulations}\label{sec:simulations}
We illustrate the performance of the Robust DeePC Algorithm~\ref{alg:robustDeePC} on a model of a quadcopter linearized around the hover position. The states of the quadcopter model are given by the 3 spatial coordinates ($x$, $y$, $z$) and their velocities, and  the 3 angular coordinates $(\alpha,\beta,\gamma)$ and their velocities, i.e., the state is $(x,y,z,\dot{x},\dot{y},\dot{z},\alpha,\beta,\gamma,\dot{\alpha},\dot{\beta},\dot{\gamma})$. The inputs are given by the individual thrusts from the 4 rotors. Full state measurement was assumed. The states are affected by additive zero-mean Gaussian noise. The state-space matrices used in~\eqref{eq:stochasticsystem} are
\setcounter{MaxMatrixCols}{20}
\[
\scalemath{0.7}{A=\begin{pmatrix}
1 & 0 & 0 & 0.1 & 0 & 0 & 0 & 0.049 & 0 & 0 & 0.0016 & 0 \\
0 &1 &0 &0 &0.1 &0 &-0.049 &0 &0 &-0.0016 &0 &0\\
0 &0 &1 &0 &0 &0.1 &0 &0 &0 &0 &0 &0\\
0 &0 &0 &1 &0 &0 &0 &0.981 &0 &0 &0.049 &0\\
0 &0 &0 &0 &1 &0 &-0.981 &0 &0 &-0.049 &0 &0\\
0 &0 &0 &0 &0 &1 &0 &0 &0 &0 &0 &0\\
0 &0 &0 &0 &0 &0 &1 &0 &0 &0.1 &0 &0\\
0 &0 &0 &0 &0 &0 &0 &1 &0 &0 &0.1 &0\\
0 &0 &0 &0 &0 &0 &0 &0 &1 &0 &0 &0.1\\
0 &0 &0 &0 &0 &0 &0 &0 &0 &1 &0 &0\\
0 &0 &0 &0 &0 &0 &0 &0 &0 &0 &1 &0\\
0 &0 &0 &0 &0 &0 &0 &0 &0 &0 &0 &1
\end{pmatrix},
}
\]
\[
\scalemath{0.7}{B=\begin{pmatrix}
-2.3\times 10^{-5} & 0 & 2.3\times 10^{-5} & 0 \\
0 & -2.3\times 10^{-5} & 0 & 2.3\times 10^{-5} \\
1.75\times 10^{-2} & 1.75\times 10^{-2} & 1.75\times 10^{-2} & 1.75\times 10^{-2} \\
-9.21\times 10^{-4} & 0 &9.21\times 10^{-4} & 0 \\
0 & -9.21\times 10^{-4} & 0 & 9.21\times 10^{-4} \\
0.35 & 0.35 & 0.35 & 0.35 \\
0 & 2.8\times 10^{-3} & 0	 & -2.8\times 10^{-3} \\
-2.8\times 10^{-3} & 0 & 2.8\times 10^{-3} & 0 \\
3.7\times 10^{-3} & -3.7\times 10^{-3} & 3.7\times 10^{-3} & -3.7\times 10^{-3} \\
0 & 5.6\times 10^{-2} & 0 & -5.6\times 10^{-2} \\
-5.6\times 10^{-2} & 0 & 5.6\times 10^{-2} & 0 \\
7.3\times 10^{-2} & -7.3\times 10^{-2} & 7.3\times 10^{-2} & -7.3\times 10^{-2}
\end{pmatrix},
}
\]
\[
\scalemath{0.7}{C=I_{12\times 12},\; D=0_{12\times 4},\; E=(I_{12\times 12},0_{12\times 12}),\; F=(0_{12\times 12},I_{12\times 12}).
}
\]
During the data collection process, the input was drawn from a uniform random variable to ensure persistency of excitation. The resulting output data was corrupted by additive noise drawn from a zero-mean Gaussian distribution.
We used 214 input/output measurements to populate the matrices $\Uphat$, $\Yphat$, $\Ufhat$, and $\Yfhat$, which is the minimum number of measurements needed to ensure persistency of excitation. We commanded the quadcopter to track a parameterized figure-8 reference trajectory, denoted by $r$.
 \begin{figure}[t!]
	\centering
		\includegraphics[width=0.9\linewidth]{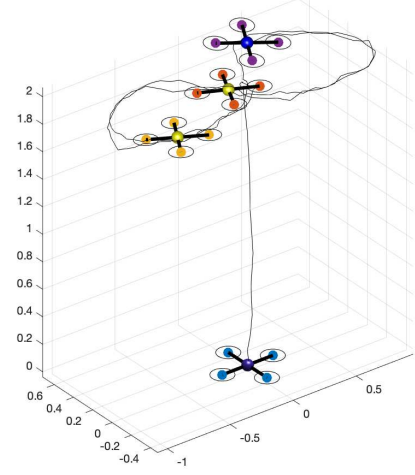}
	\caption{Quadcopter controlled using the Robust DeePC Algorithm~\ref{alg:robustDeePC} following a figure-8 trajectory. The quadcopter trajectory is shown in black.}
	\label{fig:itworksfigure8}
\end{figure}
\begin{figure}[t!]
	\centering
		\includegraphics[width=0.9\linewidth]{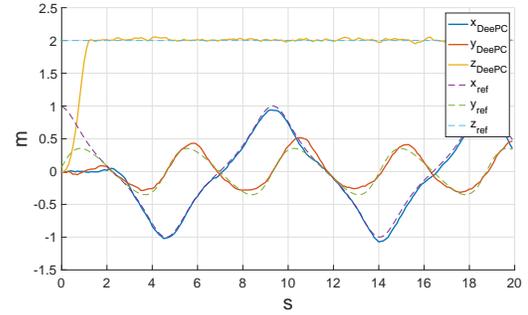}
	\caption{Position of quadcopter following a figure-8 trajectory.}
	\label{fig:itworksxyz}
\end{figure}
\begin{figure}[t!]
	\centering
		\includegraphics[width=0.9\linewidth]{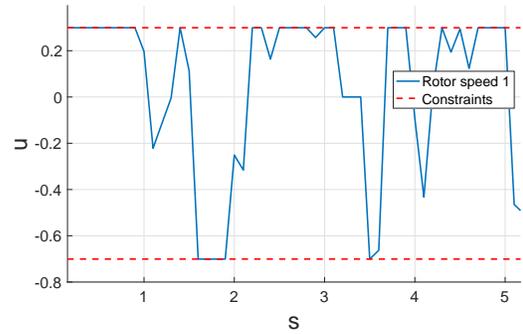}
	\caption{First element of the input signal of the quadcopter following a figure-8 trajectory. The red dashed lines represent constraints.}
	\label{fig:itworksu}
\end{figure}

The following parameters were chosen for the optimization problem~\eqref{eq:optwasserstein}: the infinity-norm $\|\cdot\|_{\infty}$ was used in the definition of the the norm $\|\cdot\|_{\textup{W}}$ used in the Wasserstein metric, $\Tf=30$, $\Tini =1$, $f(u,y) = \|u\|_1+200\|y-r\|_1$, $\lambda_{\textup{ini}}=10^5$, $\eps=0.001$, $\mathcal{U}=[-0.7007,0.2993]^{\Tf}$.
The input constraint set $\mathcal{U}$ is chosen this way to mimic the constraints present in a nonlinear quadcopter model where the normalized rotor thrusts can only vary in the set $[0,1]$. Online measurement noise was drawn from same Gaussian distribution. The performance of the algorithm is seen in Figures~\ref{fig:itworksfigure8}-\ref{fig:itworksu}. As can be seen, the algorithm exhibits desirable behaviour despite the measurement noise and process noise.
 
As discussed in Remark~\ref{rem:radius}, computing an optimal radius $\eps$ for the Wasserstein ball in a data-driven fashion may increase the performance of the algorithm. We present results of the robust algorithm for various Wasserstein radii when the quadcopter was commanded to follow a step trajectory. By observing the performance of the quadcopter, we are able to estimate a range of optimal Wasserstein radii. We again measured the full state. The states were affected by zero-mean truncated Gaussian noise, where the truncation was made 3 standard deviations from both sides of the mean. The output measurements were also corrupted by a zero-mean truncated Gaussian distribution. The infinity-norm was used in the definition of the Wasserstein metric. A Wasserstein radius $\eps$ was fixed and the cost $f(u,y)$ accumulated over the 20 second horizon of the simulation was computed. The cost was averaged over 15 random simulations for each $\eps$. 

The results shown in Figure~\ref{fig:costvseps} indicate that $\eps$ should be chosen in the interval $[0.001,0.01]$. The results also support the claim that optimizing the in-sample performance~\eqref{eq:insample} (i.e., setting $\eps=0$) displays poor out-of-sample performance, hence justifying the robust approach. It is also clear from Figure~\ref{fig:costvseps} that choosing $\eps$ large (i.e., being over-conservative) results in poor performance.
\begin{figure}[htb!]
	\centering
		\includegraphics[width=\linewidth]{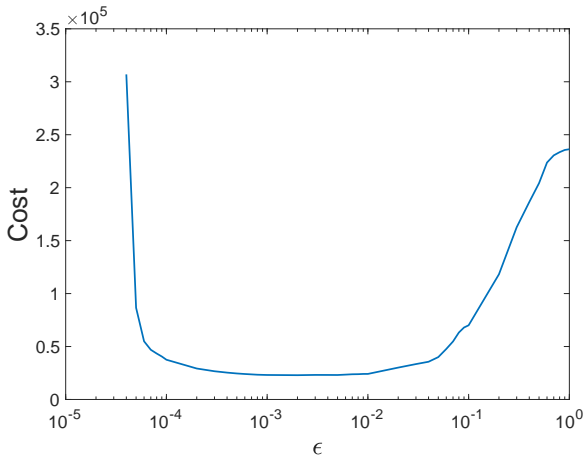}
	\caption{Performance of the Robust DeePC Algorithm~\ref{alg:robustDeePC} for many Wasserstein radii.}
	\label{fig:costvseps}
\end{figure}
%

\section{Conclusion}\label{sec:conclusion}
We studied the problem of controlling an unknown stochastic system with the goal of minimizing an objective function dependent on the input/output trajectories of the system. With no knowledge of the nature of the stochasticity, we proposed a robust DeePC algorithm which uses noise corrupted input/output data to compute optimal and robust control inputs. Robustifying the original DeePC algorithm gave rise to principled regularization terms, supporting the observed superior performance of the regularized algorithm on stochastic systems. Future work includes incorporating multiple measured data sets to improve performance, and including output constraints.


\bibliographystyle{IEEEtran}%
\bibliography{IEEEabrv,JC}

\end{document}